\documentclass[12pt]{amsart}
\usepackage{amssymb,amsmath,latexsym,amscd}

\usepackage[noadjust]{cite}
\usepackage{bm}
\usepackage{enumerate}
\usepackage{color}

\usepackage%
{hyperref}
\hypersetup{%
  colorlinks,
  bookmarksopen,
  bookmarksnumbered,
  citecolor=blue,
  linkcolor=black,
  urlcolor=blue,
  pdfstartview=FitH,
}

\newtheorem{prop}{Proposition}
\newtheorem{theorem}[prop]{Theorem}

\theoremstyle{definition}
\newtheorem*{remark}{Remark}

\newcommand{\rc}{\mathrm{RC}}
\newcommand{\N}{{\mathbb{N}}}

\def\C{{\mathbb{C}}}
\newcommand{\K}{{\mathcal{K}}}
\newcommand{\M}{{\mathcal{M}}}
\newcommand{\B}{{\mathcal{B}}}
\renewcommand{\H}{{\mathcal{H}}}
\newcommand{\ls}{\leqslant}
\newcommand{\gs}{\geqslant}
\newcommand{\la}{\langle}
\newcommand{\ra}{\rangle}
\newcommand{\hk}{\hookrightarrow}
\renewcommand{\c}{\colon}
\newcommand{\Z}{{\mathcal{S}}}
\newcommand{\ve}{{\varepsilon}}

\title[Grothendieck's inequality in $\Z$]{Grothendieck's inequality\\ in the noncommutative Schwartz space}
\author{Rupert H. Levene}
\address{%
School of Mathematics and Statistics\\
University College Dublin\\
Bel\-field\\Dublin 4\\Ireland}
\email{rupert.levene@ucd.ie}

\author{Krzysztof Piszczek}
\address{Faculty of Mathematics and Comp. Sci.\\
A. Mickiewicz University in Pozna{\'n}\\
Umultowska 87\\61-614 Pozna{\'n}\\Poland}
\email{kpk@amu.edu.pl}

\begin{document}
\begin{abstract}
  In the spirit of Grothendieck's famous inequality from the theory of
  Banach spaces, we study a sequence of inequalities for
  the noncommutative Schwartz space, a Fr\'echet algebra of smooth
  operators. These hold in non-optimal form by a simple nuclearity
  argument. We obtain optimal versions and reformulate the inequalities in several different ways.
\end{abstract}
\keywords{noncommutative Grothendieck inequality, noncommutative Schwartz space, m-convex Fr\'echet algebra, bilinear form, state.}

\subjclass[2010]{
Primary: 47A30, 47A07, 47L10. Secondary: 47A63.} \thanks{The research of the second-named author has been supported by the National Center of Science, Poland, grant no. UMO-2013/10/A/ST1/00091.}\thanks{The authors are grateful to the anonymous referee, whose comments greatly improved the presentation of this paper.}
\maketitle

\newcommand{\query}[3][blue]{\marginpar{\raggedright\tiny\color{#1} #3}{\color{#1}#2}}
\renewcommand{\query}[3][]{#2}

\section{Introduction}

The noncommutative Schwartz space~$\Z$ is a weakly amenable m-convex
Fr\'echet algebra whose properties have been investigated in several
recent papers, see e.g.~\cite{TC,PD,KP,KP1}. It is not difficult to see that as a Fr\'echet space,~$\Z$ is nuclear. From this, we can easily deduce the following
analogue of Grothendieck's inequality, which we call
\emph{Grothendieck's inequality in~$\Z$}: there exists a
constant $K>0$ %
so that for any continuous bilinear
form $u\c\Z\times\Z\to\C$ and any $n\in\N$, there exists $k\in\N$ such
that for every $m\in\N$ and any $x_1,\ldots,x_m,y_1,\ldots,y_m\in\Z$,
we have
\begin{equation}
\Big|\sum_{j=1}^mu(x_j,y_j)\Big|\ls K\|u\|_n^* \,\|(x_j)\|_k^{\rc}\,\|(y_j)\|_k^{\rc}
\label{target-inequality}
\end{equation}
The norms on the right hand side %
arise naturally from the
definition of~$\Z$, as explained in Section~\ref{section:2} below. Our
goal in this note is to show that in fact $k=2n+1$ always suffices,
and that this is best possible.

This appears to be the first result concerning Grothendieck's inequality in the category of Fr\'echet algebras; to the best of our knowledge, all previous results along these lines concern Banach spaces (including C${}^*$-algebras, general Banach algebras and operator spaces). For Fr\'echet algebras, Grothendieck's inequality seems to have a specific flavour.  Every Fr\'echet space (and a fortiori, every Fr\'echet algebra) which appears naturally in analysis is nuclear, meaning that all tensor product topologies are equal. Since Grothendieck's inequality can be understood as the equivalence of two tensor products, it seems that we can take inequality~\eqref{target-inequality} for granted. The interesting question that remains is then optimality.

This paper is divided into four sections. In the remainder of this
section, we recall a C${}^*$-algebraic version of Grothendieck's
inequality due to Haagerup, and then review the definition and the basic
properties of~$\Z$ which we require. In Section~2 we explain how
nuclearity gives Grothendieck's inequality in~$\Z$, and we estimate the constants $K$ and $k$. Section~3 then settles the optimality question for~$k$ via a matricial construction.  %
We conclude with a short section containing several reformulations of
the
inequality.%

\subsection{Grothendieck's inequality}

 Pisier's survey article~\cite{GP} is a comprehensive reference for Grothendieck's inequality. This presents many equivalent formulations and applications of this famous result, and recounts its evolution from `commutative'~\cite{AG} to `noncommutative'. %
Of these reformulations and extensions,  Haagerup's noncommutative version most closely resembles~(\ref{target-inequality}), and we state it here for the convenience of the reader.

\begin{theorem}[{\cite{UH}, \cite[Theorem~7.1]{GP}}]
  Let~$A$ and~$B$ be C${}^*$-algebras. For any bounded bilinear
  form~$u\colon A\times B\to \mathbb C$ and any finite sequence $(x_j,y_j)$
  in $A\times B$, we have
  \begin{equation*} \left|\sum u(x_j,y_j)\right| \ls 2\|u\|\,\|(x_j)\|^{\rc}\,\|(y_j)\|^{\rc}
  \end{equation*}
  where $\|(x_j)\|^{\rc} := \max\big\{
      \big\|\sum x_j^*x_j\big\|^{\frac12}
      ,
      \big\|\sum x_jx_j^*\big\|^{\frac12}
    \big\}$.

\end{theorem}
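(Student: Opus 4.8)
This is Haagerup's noncommutative Grothendieck inequality, a deep result; rather than reprove it (see~\cite{UH} and~\cite[Theorem~7.1]{GP}) let me sketch the shape of a proof. The plan is to deduce the statement from the \emph{factorisation} form of the inequality. We may assume $\|u\|\ls1$, by homogeneity. The factorisation then asserts the existence of states $f$ on $A$ and $g$ on $B$ with $|u(x,y)|\ls\bigl(f(x^*x)+f(xx^*)\bigr)^{\frac12}\bigl(g(y^*y)+g(yy^*)\bigr)^{\frac12}$ for all $x\in A$ and $y\in B$. Granting this, I would apply it to each pair $(x_j,y_j)$, sum over $j$, use the Cauchy--Schwarz inequality in $\C^m$, and use that $f$ and $g$ are states to obtain
\begin{align*}
  \left|\sum_j u(x_j,y_j)\right|
  &\ls\Bigl(f\bigl({\textstyle\sum_j}x_j^*x_j\bigr)+f\bigl({\textstyle\sum_j}x_jx_j^*\bigr)\Bigr)^{\frac12}
      \Bigl(g\bigl({\textstyle\sum_j}y_j^*y_j\bigr)+g\bigl({\textstyle\sum_j}y_jy_j^*\bigr)\Bigr)^{\frac12}\\
  &\ls\Bigl(\bigl\|{\textstyle\sum_j}x_j^*x_j\bigr\|+\bigl\|{\textstyle\sum_j}x_jx_j^*\bigr\|\Bigr)^{\frac12}
      \Bigl(\bigl\|{\textstyle\sum_j}y_j^*y_j\bigr\|+\bigl\|{\textstyle\sum_j}y_jy_j^*\bigr\|\Bigr)^{\frac12}\\
  &\ls2\,\|(x_j)\|^{\rc}\,\|(y_j)\|^{\rc},
\end{align*}
the last inequality being $(a+b)^{\frac12}\ls\sqrt2\,\max\{a^{\frac12},b^{\frac12}\}$ for $a,b\gs0$.

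Everything then reduces to producing the states $f$ and $g$. Here I would first invoke the minimax/separation argument underlying the abstract form of Grothendieck's theorem (see~\cite{GP}) to replace the factorisation by the equivalent purely scalar \emph{a priori} estimate: that $|\sum_j u(x_j,y_j)|$ is at most the right-hand side of the second line of the display above, for all finite sequences $(x_j)$ in $A$ and $(y_j)$ in $B$ --- an inequality in which states no longer appear. I would also pass to the von Neumann algebra biduals $A^{**}$ and $B^{**}$ (this is legitimate, since $\|u^{**}\|=\|u\|$ and a factorisation of $u^{**}$ restricts to one of $u$), so as to be able to use the richer structure theory available there.

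The hard part will be the a priori estimate itself. My attempt would be variational: among all configurations $(x_j)$, $(y_j)$ normalised so that $\|\sum_j(x_j^*x_j+x_jx_j^*)\|\ls1$ and $\|\sum_j(y_j^*y_j+y_jy_j^*)\|\ls1$, take one that maximises $\operatorname{Re}\sum_j u(x_j,y_j)$, and try to read off $f$ and $g$ from the first-order optimality conditions at an extreme such configuration, using the structure of the extreme points of the constraint set. I expect this step to be genuinely delicate --- it is the substance of~\cite{UH} --- and I note that the alternative known proofs (via free or Gaussian random variables, or via operator-space interpolation) are no less demanding.
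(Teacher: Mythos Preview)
The paper does not prove this theorem; it merely quotes it (with citations to~\cite{UH} and~\cite{GP}) as background and motivation for the analogous inequality in~$\Z$. So there is no ``paper's own proof'' to compare your sketch against.

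That said, your outline is a faithful summary of the standard route: the factorisation form with states is indeed equivalent to the stated inequality via Cauchy--Schwarz and the elementary bound $(a+b)^{1/2}\ls\sqrt2\max\{a^{1/2},b^{1/2}\}$, and the equivalence between the factorisation form and the a~priori scalar estimate is the usual Hahn--Banach/minimax argument. You are also right that the substantive content lies entirely in establishing the a~priori estimate, and that this is the heart of~\cite{UH}. Your description of Haagerup's variational approach is roughly accurate in spirit, though the actual argument is more intricate than ``read off $f$ and $g$ from first-order conditions''; it proceeds through an approximation by finite-rank forms and a careful analysis on matrix algebras. If you want to present this as a proof rather than a sketch, you would need to either reproduce that analysis or cite it as a black box---which is exactly what the present paper does.
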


\subsection{The noncommutative Schwartz space}

Let
\[s=\Big\{\xi=(\xi_j)_{j\in\N}\in\C^{\N}\colon |\xi|_n:=\Big(\sum_{j=1}^{+\infty}|\xi_j|^2j^{2n}\Big)^{\frac12}<+\infty\,\,\text{for all}\,\,n\in\N\Big\}\]
denote the so-called space of rapidly decreasing sequences. This space becomes Fr\'echet when endowed with the sequence $(|\cdot|_n)_{n\in\N}$ of norms defined above.
The basis $(U_n)_{n\in\N}$ of zero neighbourhoods of $s$ is defined by $U_n:=\{\xi\in s\colon |\xi|_n\ls1\}$. The topological dual~$s'$ of $s$ is the so-called space of slowly increasing sequences, namely
\[\Big\{\eta=(\eta_j)_{j\in\N}\in\C^{\N}\colon |\eta|_n':=\Big(\sum_{j=1}^{+\infty}|\eta_j|^2j^{-2n}\Big)^{\frac12}<+\infty\,\,\text{for some}\,\,n\in\N\Big\}\]
where the duality pairing is given by
$
\langle\xi,\eta\rangle:=\sum_{j\in\N}\xi_j\overline{\eta_j}$ for $\xi\in s$, $\eta\in s'$.

The \textit{noncommutative Schwartz space} $\Z$ is the Fr\'echet space $L(s',s)$ of all  continuous linear operators from $s'$ into $s$, endowed with the topology of uniform convergence on bounded sets. %
The formal identity map $\iota\c s\hk s'$ is a continuous embedding and defines a product on $\Z$ by
$xy:=x\circ\iota\circ y$ for $x,y\in\Z$.
There is also a natural involution on $\Z$ given by
$\langle x^*\xi,\eta\rangle:=\langle\xi,x\eta\rangle$ for $x\in\Z$, $\xi,\eta\in s'$.
With these operations, $\Z$ becomes an m-convex Fr\'echet $*$-algebra. The inclusion map $\Z\hk\K(\ell_2)$ is continuous, and in fact it is a spectrum-preserving $*$-homomorphism~\cite{PD}. %
Moreover~\cite[Proposition~3]{KP}, an element $x\in \Z$ is positive (i.e., $x=y^*y$ for some $y\in \Z$), if and only if the spectrum of $x$ is contained in~$[0,+\infty)$, or equivalently $\langle x\xi,\xi\rangle\geq 0$ for all $\xi\in s'$.
On the other hand, by~\cite[Cor. 2.4]{PD} %
and~\cite[Theorems~8.2, 8.3]{MF}, the topology of %
$\Z$ %
cannot be given by a sequence of C$^*$-norms. This %
causes some technical inconvenience (e.g.~there is no bounded
approximate identity in~$\Z$) meaning we cannot apply C$^*$-algebraic
techniques directly. %

\section{The inequality}\label{section:2}

Let $(\|\cdot\|_n)_{n\in\N}$ be a non-decreasing sequence  of norms which gives the topology of~$\Z$. For  $u\c\Z\times\Z\to\C$ a continuous bilinear form, we write
\[\|u\|^*_n:=\sup\{|u(x,y)|\colon x,y\in\mathcal{U}_n\}\]
where $\mathcal{U}_n=\{x\in\Z\c\,\|x\|_n\ls1\}$; similarly, for a functional $\phi\in \Z'$, we write
\[ \|\phi\|^*_n:=\sup\{ |\phi(x)|\c\, x\in\mathcal{U}_n\}.\] %
Following
Pisier~\cite[p.~316]{GP2}, for $k\in\N$ and $x_1,x_2,\dots,x_m\in \Z$, we write
\[ \|(x_j)\|_k^{\rc}=\max\Big\{\Big\|\sum_{j=1}^mx_j^*x_j\Big\|_{k}^{\frac12},\Big\|\sum_{j=1}^mx_jx_j^*\Big\|_{k}^{\frac12}\Big\}.\]
Relative to our choice of norms $\|\cdot\|_n$, we
have now defined each term in our hoped-for
inequality~(\ref{target-inequality}). We will now reformulate it using
tensor products.

For C$^*$-algebras, such a reformulation is standard. Indeed, by~\cite[Theorem~1.1]{UH} (formulated along the lines of~\cite[Theorem~2.1]{KS}), Haagerup's noncommutative Grothendieck inequality entails the existence of a $K>0$ such that for any C$^*$-algebras $A,B$ and $z$ in the algebraic tensor product $A\otimes B$, we have $\|z\|_{\pi}\ls K\|z\|_{ah}$
where $\|\cdot\|_{\pi}$ is the projective tensor norm and
$\|\cdot\|_{ah}$ is the absolute Haagerup tensor norm~\cite[p.~164]{KS} on~$A\otimes B$, given by
\[ \|z\|_{ah}=\inf\Big\|\sum_{j=1}^m|x_j|^2\Big\|^{\frac12}\Big\|\sum_{j=1}^m|y_j|^2\Big\|^{\frac12}.\]
Here $|x|=\big(\tfrac12(x^*x+xx^*)\big)^{\frac12}$ for $x$ an element of a C$^*$-algebra, and the infimum is taken over all representations $z=\sum_{j=1}^{m} x_j\otimes y_j$ where $(x_j,y_j)\in A\times B$.

We proceed similarly for $\Z$. For $x\in\Z$, let
$|x|^2=\tfrac12(x^*x+xx^*)\in \Z$
and consider the sequence
of \textit{absolute Haagerup tensor norms} %
$(\|\cdot\|_{ah,n})_{n\in\N}$ on the algebraic tensor product $\Z\otimes\Z$ given by
\[\|z\|_{ah,n}:=\inf\Big\|\sum_{j=1}^m|x_j|^2\Big\|_n^{\frac12}\Big\|\sum_{j=1}^m|y_j|^2\Big\|_n^{\frac12}\]
where the infimum runs over all ways to represent $z=\sum_{j=1}^mx_j\otimes y_j$ in $\Z\otimes\Z$. As  usual, we write $(\|\cdot\|_{\pi,n})_{n\in\N}$ for the sequence of projective tensor norms on $\Z\otimes \Z$.

Just as in the C$^*$-algebra case, inequality \eqref{target-inequality} will follow once we show that the sequences of projective and absolute Haagerup tensor norms are equivalent on $\Z\otimes\Z$. In fact, the equivalence of these norms follows immediately from the nuclearity of~$\Z$ (see~\cite[Theorem~28.15]{MV} and~\cite[Ch. 21, \S2, Theorem~1]{J} for details). On the other hand, the optimal values of $k$ and~$K$ (depending on~$n$ and our choice of norms~$(\|\cdot\|_n)_{n\in\N}$) for which~\eqref{target-inequality} hold are not given by such general considerations. These optimal parameters will be denoted by $\kappa(n):=k_{best}$ and $K_n:=K_{best}$.

Henceforth, we focus only on the sequence of norms $(\|\cdot\|_n)_{n\in\N}$ where
\[\|x\|_n:=\sup\{|x\xi|_n\colon \xi\in U^{\circ}_n\},\qquad n\in\N,\ x\in\Z\]
and $U_n^{\circ}=\{\xi\in s'\c\,\,|\xi|_n'\ls1\}$. In other words, $\|x\|_n$ is the norm of~$x\in \Z$, considered as a Hilbert space operator from $H_n':=\ell_2((j^{-n})_j)$ to $H_n:=\ell_2((j^n)_j)$.
This sequence does indeed induce the topology of~$\Z$.
In this context, we will estimate~$K_n$ and compute the exact values of~$\kappa(n)$.

We start with the following result, which can be compared with~\cite[Lemma~8]{KP}. To fix some useful notation, for $n\in\mathbb N$ we define an infinite diagonal matrix $d_n:=\operatorname{diag}(1^n,2^n,3^n,4^n,\dots)$\label{dn-def} which we consider as an isometry $d_n\c\ell_2\to H_n' $ and simultaneously as an isometry $d_n\c H_n\to\ell_2$.

\goodbreak
\begin{prop}
\label{inequalities}
Let~$n\in\mathbb N$. We have

\begin{enumerate}
\item[(i)]
$\|x\|_n=\sup\{\la x\xi,\xi\ra\colon \xi\in U^\circ_n\}$
for every positive $x\in\Z$;

\item[(ii)] $\|x\|_n^2\ls\|x^2\|_{2n}$ for every self-adjoint $x\in\Z$; and

\item[(iii)] $\|x\|_n^2\ls\|x^*x\|_{2n}^{\frac12}\|xx^*\|_{2n}^{\frac12}$ for every $x\in\Z$.
\end{enumerate}
Moreover, inequalities~(ii) and~(iii) are sharp.
\end{prop}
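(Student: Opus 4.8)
The plan is to work throughout with the concrete description of the norms: $\|x\|_n$ is the operator norm of $d_n x d_n$ acting on $\ell_2$, where $d_n=\operatorname{diag}(1^n,2^n,3^n,\dots)$ is viewed simultaneously as an isometry $\ell_2\to H_n'$ and $H_n\to\ell_2$, as in the paragraph preceding the statement. Under this dictionary, for $x\in\Z$ we have $\|x\|_n=\|d_nxd_n\|_{B(\ell_2)}$, while the involution on $\Z$ is compatible with the Hilbert-space adjoint in the sense that $d_n x^* d_n=(d_{-n}xd_{-n})^*$ is not quite the adjoint of $d_nxd_n$ — so the main preliminary step is to pin down exactly how $\|x^*x\|_{2n}$ and $\|xx^*\|_{2n}$ read off in terms of the single operator $T:=d_{-n}\,x\,d_{-n}$ or an appropriately rescaled version. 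Concretely, I expect $\|x^*x\|_{2n}=\|d_{2n}x^*xd_{2n}\|=\|d_{2n}x^*d_{-2n}\cdot d_{2n}xd_{2n}\|$, and the product $d_{2n}x\,d_{2n}$ versus $d_{2n}x^*d_{-2n}$ will need to be related via the definition $\langle x^*\xi,\eta\rangle=\langle\xi,x\eta\rangle$; carrying this out carefully is the bookkeeping core of the argument.

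For part (i): if $x\in\Z$ is positive then $x=y^*y$ for some $y\in\Z$, and $\langle x\xi,\xi\rangle=|y\xi|^2\ge0$ for $\xi\in s'$, so the supremum in (i) is a supremum of nonnegative reals; the inequality $\sup_{\xi\in U_n^\circ}\langle x\xi,\xi\rangle\le\|x\|_n$ is immediate from Cauchy–Schwarz since $|\langle x\xi,\xi\rangle|=|\langle x\xi,\iota\xi\rangle|\le|x\xi|_n\,|\xi|_n'\le\|x\|_n$ for $\xi\in U_n^\circ$. For the reverse inequality, translate to $A:=d_nxd_n\in B(\ell_2)$, which is a positive bounded operator (positivity of $x$ transfers because $d_n$ is the relevant isometry and $\langle x\xi,\xi\rangle\ge0$ becomes $\langle A\zeta,\zeta\rangle\ge0$ for all $\zeta\in\ell_2$), and then $\|x\|_n=\|A\|=\sup_{\|\zeta\|\le1}\langle A\zeta,\zeta\rangle$ by the standard fact that a positive operator attains its norm as a numerical-range supremum; unravelling $\zeta=d_n^{-1}\xi$ gives exactly the claimed supremum over $\xi\in U_n^\circ$.

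For (ii) and (iii), the key inequality is (iii), with (ii) the special case $x=x^*$. Given $x\in\Z$, apply (i) to the positive element $x^*x\in\Z$ — wait, we want $\|x\|_n^2$, so instead: for $\xi\in U_n^\circ$ write $|x\xi|_n^2=\langle x\xi,x\xi\rangle_{H_n}$ and massage this into $\langle (\text{something})\,\xi,\xi\rangle$ involving $x^*x$ and $xx^*$. The clean way is operator-theoretic: with $A=d_nxd_n$, one has $\|x\|_n^2=\|A\|^2=\|A^*A\|=\|AA^*\|$ in $B(\ell_2)$, and the content of the proposition is that $\|A^*A\|\le\|d_{2n}x^*xd_{2n}\|=\|x^*x\|_{2n}$ together with the symmetric statement, followed by $\|A\|^2=\|A^*A\|^{1/2}\|AA^*\|^{1/2}$. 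The inequality $\|A^*A\|_{B(\ell_2)}\le\|x^*x\|_{2n}$ should come from a Cauchy–Schwarz / factorization estimate comparing $d_n\cdots d_n$ with $d_{2n}\cdots d_{2n}$ through the "missing" $d_n$ on the inside, using that $\iota\colon s\hookrightarrow s'$ contracts the relevant norms; this is where I expect the real work and the main obstacle to lie — getting the index $2n$ exactly, not $2n+\text{const}$, out of the interpolation between $H_n$ and $\ell_2$.

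Finally, sharpness of (ii) and (iii): it suffices to exhibit, for each $n$, self-adjoint (resp.\ general) elements $x\in\Z$ making the two sides as close as desired, and the natural candidates are truncated diagonal or weighted shift operators. For (ii), a diagonal $x=\operatorname{diag}(a_1,a_2,\dots)\in\Z$ gives $\|x\|_n=\sup_j|a_j|j^{2n}$ and $\|x^2\|_{2n}=\sup_j|a_j|^2j^{4n}$, so equality holds identically; for (iii) one takes $x$ to be a suitable rank-one or finite shift-type operator so that $x^*x$ and $xx^*$ are "spread" between two coordinates with weights $j^{2n}$ and $k^{2n}$, and letting the gap between the two coordinates grow forces the product $\|x^*x\|_{2n}^{1/2}\|xx^*\|_{2n}^{1/2}$ down to $\|x\|_n^2$. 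Pinning down this extremal family and checking it lies in $\Z$ (which is automatic for finitely-supported matrices) is routine once the right family is guessed.
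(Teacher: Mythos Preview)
Your treatment of part~(i) is fine and matches the paper's argument.

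The real gap is in your plan for (ii)--(iii). You propose to prove (iii) directly, by establishing the two separate inequalities
\[
\|A^*A\|_{\B(\ell_2)}\ls\|x^*x\|_{2n}\quad\text{and}\quad \|AA^*\|_{\B(\ell_2)}\ls\|xx^*\|_{2n}
\]
for $A=d_nxd_n$, and then multiplying. These individual inequalities are \emph{false}. Since $A^*A=(d_nxd_n)^*(d_nxd_n)$ and $d_{2n}x^*xd_{2n}=(xd_{2n})^*(xd_{2n})$, the first inequality reads $\|d_nxd_n\|\ls\|xd_{2n}\|$; taking $x=e_{21}$ gives $\|d_nxd_n\|=2^n$ but $\|xd_{2n}\|=1$. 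Symmetrically, $x=e_{12}$ kills the second inequality. So no ``Cauchy--Schwarz/factorization estimate through the missing $d_n$'' can produce these bounds, and your derivation of (ii) as the self-adjoint special case of (iii) collapses with it. The point is that only the \emph{product} of $\|x^*x\|_{2n}^{1/2}$ and $\|xx^*\|_{2n}^{1/2}$ dominates $\|x\|_n^2$; the two factors individually can be arbitrarily small relative to $\|x\|_n^2$.

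The paper's route runs in the opposite order. First (ii) is proved by a spectral-radius trick: for self-adjoint $x$, $d_nxd_n$ is self-adjoint in $\B(\ell_2)$, so $\|d_nxd_n\|=\nu(d_nxd_n)=\nu(d_{2n}x)\ls\|d_{2n}x\|=\|x^2\|_{2n}^{1/2}$, using $\nu(AB)=\nu(BA)$. Then (iii) is obtained from the pointwise inequality $|\la x\xi,\eta\ra|^2\ls\la(xx^*)^{1/2}\eta,\eta\ra\,\la(x^*x)^{1/2}\xi,\xi\ra$ (coming from positivity of the $2\times2$ block matrix with entries $(xx^*)^{1/2},x,x^*,(x^*x)^{1/2}$), which after taking suprema yields $\|x\|_n^2\ls\|(xx^*)^{1/2}\|_n\,\|(x^*x)^{1/2}\|_n$; finally one applies (ii) to the positive operators $(xx^*)^{1/2}$ and $(x^*x)^{1/2}$. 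For sharpness the paper simply takes a diagonal rank-one matrix unit $x=e_{jj}$, which gives equality in both (ii) and (iii); your diagonal example for (ii) is equivalent, and your more elaborate shift construction for (iii) is unnecessary.
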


\begin{proof}  (i) Observe that $\|x\|_n=\|d_nxd_n\|_{\B(\ell_2)}$. Furthermore, since~$x$ is positive, $d_nxd_n$ is positive %
and we have
\begin{align*}
\|x\|_n & =\|d_nxd_n\|_{\B(\ell_2)}
 =\sup\{\la xd_n\xi,d_n\xi\ra\colon |\xi|_{\ell_2}\ls1\} \\
 &=\sup\{\la x\xi,\xi\ra\colon |\xi|_n'\ls1\}.
\end{align*}

(ii)  For $x$ self-adjoint, we have \[\|x^2\|_{2n}=\|d_{2n}x^2d_{2n}\|_{\B(\ell_2)}=\|d_{2n}x\|^2_{\B(\ell_2)},\]
and by~\cite[Proposition~II.1.4.2]{BB},
\[\|x\|_n=\|d_nxd_n\|_{\B(\ell_2)}=\nu(d_nxd_n)=\nu(d_{2n}x)\ls\|d_{2n}x\|_{\B(\ell_2)},\]
where $\nu(\cdot)$ denotes the spectral radius. This gives the desired inequality.

(iii) Since $\Z\hk\B(\ell_2)$, any $x\in \Z$ is also a Hilbert space operator, and the block-matrix operator
$\left[\begin{smallmatrix}
(xx^*)^{1/2} & x \\
x^* & (x^*x)^{1/2}
\end{smallmatrix}\right]$
is positive in $\B(\ell_2\oplus\ell_2)$ (see e.g.%
~\cite[p.~117]{VP}). Equivalently,
\begin{equation}
|\la x\xi,\eta\ra|^2\ls\la(xx^*)^{1/2}\eta,\eta\ra\la(x^*x)^{1/2}\xi,\xi\ra\qquad\forall\,\xi,\eta\in\ell_2.
\label{positive-ineq1}
\end{equation}
For $m\in\N$, let us write $p_m:=\left[\begin{smallmatrix}
I_m & 0 \\0 & 0
\end{smallmatrix}\right]
$
where $I_m\in\M_m$ is the identity matrix. Now fix $n\in\N$ and choose $\xi,\eta\in H_n'$. Then $p_m\xi,p_m\eta\in\ell_2$ for all $m\in\N$ and \eqref{positive-ineq1} gives
\[|\la p_mxp_m\xi,\eta\ra|^2\ls\la p_m(xx^*)^{1/2}p_m\eta,\eta\ra\la p_m(x^*x)^{1/2}p_m\xi,\xi\ra.\]
Since $(p_m)_{m\in\N}$ is an approximate identity in $\Z$ (see~\cite[Proposition~2]{KP}), we obtain
\[|\la x\xi,\eta\ra|^2\ls\la(xx^*)^{1/2}\eta,\eta\ra\la(x^*x)^{1/2}\xi,\xi\ra.\]
Taking the supremum over all $\xi,\eta$ in the unit ball of $H_n'$ we get
\[\|x\|_n^2\ls\|(xx^*)^{1/2}\|_n\|(x^*x)^{1/2}\|_n.\]
Applying (ii) to the positive operators $(xx^*)^{1/2}$ and $(x^*x)^{1/2}$ we conclude that
$\|x\|_n^2
 \ls\|x^*x\|_{2n}^{\frac12}\|xx^*\|_{2n}^{\frac12}$.

For sharpness, observe that if~$x$ is a diagonal rank one matrix unit then we have equality in both~(ii) and~(iii).
\end{proof}

\begin{prop}
\label{prop-fourth-root}
For any $n,m\in\N$ and $x_1,\ldots,x_m,y_1,\ldots,y_m\in\Z$, we have
\begin{multline*}
\sum_{j=1}^m\|x_j\|_n\|y_j\|_n \\
\ls\frac{\pi^2}{6}\Big\|\sum_{j=1}^mx_j^*x_j\Big\|_{2n+1}^{\frac14}
\Big\|\sum_{j=1}^mx_jx_j^*\Big\|_{2n+1}^{\frac14}
\Big\|\sum_{j=1}^my_j^*y_j\Big\|_{2n+1}^{\frac14}
\Big\|\sum_{j=1}^my_jy_j^*\Big\|_{2n+1}^{\frac14}.
\end{multline*}
\end{prop}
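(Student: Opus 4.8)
The plan is to deduce the estimate from Proposition~\ref{inequalities}(iii), a four-fold Hölder inequality, and one elementary auxiliary bound that "costs" exactly one index and produces the constant $\pi^2/6$.

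First I would take fourth roots in Proposition~\ref{inequalities}(iii), obtaining $\|x_j\|_n\ls\|x_j^*x_j\|_{2n}^{1/4}\|x_jx_j^*\|_{2n}^{1/4}$ and likewise $\|y_j\|_n\ls\|y_j^*y_j\|_{2n}^{1/4}\|y_jy_j^*\|_{2n}^{1/4}$. Multiplying these and summing over $j$ bounds the left-hand side by $\sum_{j}\|x_j^*x_j\|_{2n}^{1/4}\|x_jx_j^*\|_{2n}^{1/4}\|y_j^*y_j\|_{2n}^{1/4}\|y_jy_j^*\|_{2n}^{1/4}$, and the generalised Hölder inequality for sums, with four exponents all equal to $4$, shows this is at most
\[
\Big(\sum_{j}\|x_j^*x_j\|_{2n}\Big)^{\frac14}\Big(\sum_{j}\|x_jx_j^*\|_{2n}\Big)^{\frac14}\Big(\sum_{j}\|y_j^*y_j\|_{2n}\Big)^{\frac14}\Big(\sum_{j}\|y_jy_j^*\|_{2n}\Big)^{\frac14}.
\]
So it suffices to prove the following claim: for every $N\in\N$ and positive $a_1,\dots,a_m\in\Z$,
\[
\sum_{j=1}^m\|a_j\|_{N}\ls\frac{\pi^2}{6}\,\Big\|\sum_{j=1}^m a_j\Big\|_{N+1}.
\]
Granting this, apply it with $N=2n$ to each of the four factors above; since $\big((\pi^2/6)^{1/4}\big)^4=\pi^2/6$, the proposition follows.

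For the claim, recall from the proof of Proposition~\ref{inequalities} that $\|a\|_{N}=\|d_Na\,d_N\|_{\B(\ell_2)}$. Since each $a_j$ is positive (hence positive as an operator on $\ell_2$) and $d_N$ is self-adjoint, $d_Na_jd_N$ is a positive operator on $\ell_2$, so its operator norm is bounded by its trace: $\|a_j\|_{N}\ls\operatorname{tr}(d_Na_jd_N)=\sum_{k}k^{2N}(a_j)_{kk}$. Summing the finitely many $j$ and writing $a=\sum_{j}a_j$ gives $\sum_{j}\|a_j\|_{N}\ls\sum_{k}k^{2N}a_{kk}$. Now, entrywise, $d_N=\operatorname{diag}(k^{-1})\,d_{N+1}$, so $k^{2N}a_{kk}=k^{-2}\,(d_{N+1}a\,d_{N+1})_{kk}$; and because $d_{N+1}a\,d_{N+1}$ is positive, each diagonal entry $(d_{N+1}a\,d_{N+1})_{kk}$ lies in $[\,0,\ \|d_{N+1}a\,d_{N+1}\|_{\B(\ell_2)}\,]=[\,0,\ \|a\|_{N+1}\,]$. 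Hence $\sum_{k}k^{2N}a_{kk}\ls\big(\sum_{k}k^{-2}\big)\|a\|_{N+1}=\frac{\pi^2}{6}\|a\|_{N+1}$, which is the claim.

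I do not expect a serious obstacle. The only points needing a word of care are the trace bookkeeping — that $\operatorname{tr}(d_Na_jd_N)=\sum_k k^{2N}(a_j)_{kk}$ is finite and that the finite sum over $j$ may be interchanged with the sum over $k$ — both immediate, since the terms are nonnegative and finiteness follows either from the rapid decay of the matrix entries of elements of $\Z$ or, a posteriori, from the displayed chain of inequalities itself. The one genuinely useful idea is the factorisation $d_N^2=\operatorname{diag}(k^{-2})\,d_{N+1}^2$, in which $\operatorname{diag}(k^{-2})$ has finite trace $\pi^2/6$: this is exactly what turns a single lost index into the constant $\pi^2/6$, and no C${}^*$-algebraic machinery is needed beyond $\|T\|_{\B(\ell_2)}\ls\operatorname{tr}(T)$ for positive $T$.
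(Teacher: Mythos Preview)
Your proof is correct and follows the same route as the paper: reduce via Proposition~\ref{inequalities}(iii) and iterated Cauchy--Schwarz (your four-fold H\"older) to the key claim $\sum_j\|a_j\|_N\le\tfrac{\pi^2}{6}\bigl\|\sum_ja_j\bigr\|_{N+1}$ for positive $a_j\in\Z$. Your proof of this claim, via $\|T\|_{\B(\ell_2)}\le\operatorname{tr}(T)$ for positive $T$, is slightly more direct than the paper's, which expands $\sum_k\langle x_k^*x_k\xi^k,\xi^k\rangle$ in the standard basis and invokes $|y_{ij}|^2\le y_{ii}y_{jj}$ for positive $y$ before arriving at the same diagonal-entry bound $\sum_k k^{-2}\,(d_{N+1}a\,d_{N+1})_{kk}\le\tfrac{\pi^2}{6}\|a\|_{N+1}$.
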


\begin{proof}
Let $C:=\frac{\pi^2}{6}$ and let~$p\in \N$. We claim that
\[
\sum_{k=1}^m\|x_k^*x_k\|_p\ls C\Big\|\sum_{k=1}^mx_k^*x_k\Big\|_{p+1}.
\]
By the Cauchy--Schwarz inequality and Proposition~\ref{inequalities}(iii) this will then imply the desired inequality. To establish the claim, %
let $\xi^1,\ldots,\xi^m\in U_p^{\circ}$ and let us write $(e_{j})_{j\in\N}$ for the standard basis vectors in~$\ell^2$. We have
\[\sum_{k=1}^m\la x_k^*x_k\xi^k,\xi^k\ra=\sum_{k=1}^m\sum_{i,j=1}^{+\infty}\la x_k^*x_ke_j,e_i\ra (ij)^p\overline{\xi_i^k}i^{-p}\xi_j^kj^{-p}.\]
Applying the Cauchy--Schwarz inequality to summation over $i,j\in\N$ gives
\[\sum_{k=1}^m\la x_k^*x_k\xi^k,\xi^k\ra\ls\sum_{k=1}^m\Bigl(\sum_{i,j=1}^{+\infty}|\la x_k^*x_ke_j,e_i\ra|^2(ij)^{2p}\Bigr)^{\frac12}.\]
 Since $x^*x$ is positive for any $x\in\Z$, and for positive operators $y\in\Z$ we have $|y_{ij}|^2\ls y_{ii}y_{jj}$ (where $y_{ij}:=\langle ye_j,e_i\rangle$), this implies that
\begin{align*}
\sum_{k=1}^m\la x_k^*x_k\xi^k,\xi^k\ra & \ls\sum_{j=1}^{+\infty}\Big\la\Bigl(\sum_{k=1}^mx_k^*x_k\Bigr)j^pe_j,j^pe_j\Big\ra  \\
& \ls\sum_{j=1}^{+\infty}j^{-2}\sup_{i\in\N}\Big\la\Bigl(\sum_{k=1}^mx_k^*x_k\Bigr)i^{p+1}e_i,i^{p+1}e_i\Big\ra  \\
& \ls C\Big\|\sum_{k=1}^mx_k^*x_k\Big\|_{p+1}.
\end{align*}

By Proposition~\ref{inequalities}(i), for any $\ve>0$ there are $\xi^1,\ldots,\xi^m\in U_p^{\circ}$ with
\[
\sum_{k=1}^m\|x_k^*x_k\|_p<\sum_{k=1}^m\la x_k^*x_k\xi^k,\xi^k\ra+\ve<C\Big\|\sum_{k=1}^mx_k^*x_k\Big\|_{p+1}+\ve.\]
Taking the infimum over $\ve>0$ yields the claim.
\end{proof}

\goodbreak
As a straightforward consequence of Proposition~\ref{prop-fourth-root}, we obtain:

\begin{theorem}[Grothendieck's inequality in~$\Z$]
\label{gt-s}
There is a constant $K\ls\frac{\pi^2}{6}$ such that $\|z\|_{\pi,n}\ls2K\|z\|_{ah,2n+1}$ for any $n\in\N$ and $z\in\Z\otimes\Z$. Moreover,
every continuous bilinear form $u\c\Z\times\Z\to\C$ satisfies inequality~(\ref{target-inequality}) with $k=2n+1$, for any $n,m\in \N$ and any $x_1,\ldots,x_m,y_1,\ldots,y_m\in\Z$. %
In particular, taking $u(x,y):=\phi(x)\phi(y)$ where $\phi\in\Z'$, we obtain
\begin{equation}
\sum_{j=1}^m|\phi(x_j)|^2\ls K(\|\phi\|_n^*\,\|(x_j)\|^{\rc}_{2n+1})^2%
\label{gt-functional}
\end{equation}

\end{theorem}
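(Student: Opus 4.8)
The plan is to read off all three assertions from Proposition~\ref{prop-fourth-root} using only elementary manipulations; the substantive work is already packaged in that proposition.

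\emph{The inequality~\eqref{target-inequality}.} Fix $n\in\N$. For a continuous bilinear form~$u$ the number $\|u\|_n^*$ is finite once $n$ is large enough, but for small $n$ it may be $+\infty$, in which case~\eqref{target-inequality} holds trivially (and if moreover $\|(x_j)\|_{2n+1}^{\rc}=0$ then every $x_j=0$, so the left side also vanishes). So assume $\|u\|_n^*<+\infty$. By homogeneity, $|u(x,y)|\ls\|u\|_n^*\,\|x\|_n\|y\|_n$ for all $x,y\in\Z$, hence
\[
\Big|\sum_{j=1}^m u(x_j,y_j)\Big|\ls\|u\|_n^*\sum_{j=1}^m\|x_j\|_n\|y_j\|_n.
\]
Bounding the last sum by Proposition~\ref{prop-fourth-root} gives $\tfrac{\pi^2}{6}\|u\|_n^*$ times the product of the fourth-roots of $\|\sum_jx_j^*x_j\|_{2n+1}$, $\|\sum_jx_jx_j^*\|_{2n+1}$, $\|\sum_jy_j^*y_j\|_{2n+1}$ and $\|\sum_jy_jy_j^*\|_{2n+1}$. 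Since the geometric mean of two nonnegative reals is at most their maximum, the first two of these fourth-roots multiply to at most $\|(x_j)\|_{2n+1}^{\rc}$ and the last two to at most $\|(y_j)\|_{2n+1}^{\rc}$. This is~\eqref{target-inequality} with $k=2n+1$ and $K=\tfrac{\pi^2}{6}$, so the optimal constant satisfies $K_n\ls\tfrac{\pi^2}{6}$.

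\emph{The tensor-norm inequality.} Fix a representation $z=\sum_{j=1}^mx_j\otimes y_j$ in $\Z\otimes\Z$ and again estimate $\sum_j\|x_j\|_n\|y_j\|_n$ by Proposition~\ref{prop-fourth-root}. The only extra ingredient is that $\|\cdot\|_{2n+1}$ is monotone on the positive cone of~$\Z$: if $0\ls a\ls b$ then $\la a\xi,\xi\ra\ls\la b\xi,\xi\ra$ for all $\xi$, so $\|a\|_{2n+1}\ls\|b\|_{2n+1}$ by Proposition~\ref{inequalities}(i). Applying this to $\sum_jx_j^*x_j\ls\sum_jx_j^*x_j+\sum_jx_jx_j^*=2\sum_j|x_j|^2$, and to the companion bound for $\sum_jx_jx_j^*$, replaces the first two fourth-roots by $\sqrt2\,\|\sum_j|x_j|^2\|_{2n+1}^{1/2}$; treating the $y_j$ the same way contributes a further $\sqrt2$. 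Therefore
\[
\sum_{j=1}^m\|x_j\|_n\|y_j\|_n\ls2\cdot\tfrac{\pi^2}{6}\,\Big\|\sum_{j=1}^m|x_j|^2\Big\|_{2n+1}^{1/2}\Big\|\sum_{j=1}^m|y_j|^2\Big\|_{2n+1}^{1/2}.
\]
As $\|z\|_{\pi,n}\ls\sum_j\|x_j\|_n\|y_j\|_n$ for every representation, taking the infimum over representations on the right gives $\|z\|_{\pi,n}\ls2K\|z\|_{ah,2n+1}$ with $K=\tfrac{\pi^2}{6}$.

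\emph{The functional inequality~\eqref{gt-functional}.} Apply~\eqref{target-inequality} to $u(x,y):=\phi(x)\phi(y)$, for which $\|u\|_n^*=(\|\phi\|_n^*)^2$. To pass from $\sum_j\phi(x_j)\phi(y_j)$ to $\sum_j|\phi(x_j)|^2$, choose unimodular $\lambda_j\in\C$ with $\lambda_j\phi(x_j)=|\phi(x_j)|$ and put $y_j:=\lambda_jx_j$; then $\sum_j|\phi(x_j)|^2=\sum_ju(\lambda_jx_j,x_j)$, while $|\lambda_j|=1$ gives $\sum_j(\lambda_jx_j)^*(\lambda_jx_j)=\sum_jx_j^*x_j$ and the analogous identity for the adjoints, so $\|(\lambda_jx_j)\|_{2n+1}^{\rc}=\|(x_j)\|_{2n+1}^{\rc}$. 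Feeding this into~\eqref{target-inequality} gives~\eqref{gt-functional}. I expect none of these steps to be a genuine obstacle: all of the difficulty has been absorbed into Proposition~\ref{prop-fourth-root}, and what remains are two elementary facts (geometric mean $\ls$ maximum; order-monotonicity of $\|\cdot\|_{2n+1}$ on positive elements, via Proposition~\ref{inequalities}(i)) together with the unimodular rescaling in the last part.
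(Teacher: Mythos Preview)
Your approach is exactly what the paper intends: it presents the theorem as a ``straightforward consequence'' of Proposition~\ref{prop-fourth-root} without further argument, and your derivation fills in precisely those routine steps (geometric mean $\ls$ maximum for the $\rc$-norm bound, order-monotonicity via Proposition~\ref{inequalities}(i) for the absolute Haagerup norm, and a unimodular rescaling for the functional case).

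One small slip in the last part: with $u(x,y)=\phi(x)\phi(y)$ bilinear (not sesquilinear), you get $u(\lambda_j x_j, x_j)=\lambda_j\phi(x_j)^2=|\phi(x_j)|\,\phi(x_j)$, which is not $|\phi(x_j)|^2$ in general. The fix is to apply~\eqref{target-inequality} with \emph{both} arguments equal to $\lambda_j x_j$: then $u(\lambda_j x_j,\lambda_j x_j)=\lambda_j^2\phi(x_j)^2=(\lambda_j\phi(x_j))^2=|\phi(x_j)|^2$, and since $|\lambda_j|=1$ the $\rc$-norms are unchanged, so the rest of your argument goes through verbatim.
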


\begin{remark}
  This shows that
  $\kappa(n)\ls2n+1$. On the other hand, it is easy to show that
  $\kappa(n)>2n-1$. Indeed, if not, then~\eqref{gt-functional} would hold with
   $2n+1$ replaced by some ${\ell}\ls 2n-1$. Take $m\in\N$, define
  $\xi_m:=\sum_{j=1}^mj^ne_j$ and $\phi_m\in\Z'$ by $\phi_m(x):=\la
  x\xi_m,\xi_m\ra$. Then for $x_j:=e_{jj},\,j=1,\ldots,m$ we get $\|(x_j)\|^{\rc}_\ell=m^\ell$
  and
  $\|\phi_m\|_n^*=m$. On the other hand, $\sum_{j=1}^m|\phi_m(x_j)|^2$
  is equivalent (up to a constant) to $m^{4n+1}$. Therefore
  \eqref{gt-functional} takes the form $m^{4n+1}\ls Cm^{2\ell+2}$ for some constant $C$ (independent of
  $m$). Letting $m$ tend to infinity, we obtain ${\ell}\gs2n-\frac12$,
  a contradiction.  Hence $\kappa(n)\in \{2n,2n+1\}$.
\end{remark}

\section{Optimality}

We will now show that $\kappa(n)=2n+1$.  For this, we will use the tensor product formulation, noting that
\[\kappa(n)=\min\Big\{k\in\N\colon \sup\Big\{\frac{\|z\|_{\pi,n}}{\|z\|_{ah,k}}\colon z\in\Z\otimes\Z,\,z\neq0\Big\}<\infty\Big\}.\]
Recall the diagonal operator $d_n$ defined on page~\pageref{dn-def}
above. Since every $x\in\Z$ is an operator on $\ell_2$ via the
canonical inclusions $\ell_2\stackrel{d_n}\hk H_n'\xrightarrow{x}H_n\stackrel{d_n}\hk\ell_2$,
it is clear that if $x\in\Z$, then $d_nx$ and $xd_n$ are both operators on $\ell_2$. This leads to the following observation.

\begin{prop}
\label{elemntary-equalities}
    If $z=\sum_{j=1}^kx_j\otimes y_j\in\Z\otimes\Z$, then
    \[\|z\|_{\pi,n}=\Big\|\displaystyle\sum_{j=1}^kd_nx_jd_n\otimes d_ny_jd_n\Big\|_{\pi}.\]
\end{prop}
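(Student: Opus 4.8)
The plan is to realise the right-hand side as the projective tensor norm of the image of $z$ under a single isometry, and then to move the norm across. Let $\iota_n\c\Z\to\K(\ell_2)$ be the linear map $\iota_n(x):=d_nxd_n$, where the two factors $d_n$ are the isometries $\ell_2\xrightarrow{d_n}H_n'$ and $H_n\xrightarrow{d_n}\ell_2$; thus $\iota_n(x)$ is just $x$ regarded as an operator on $\ell_2$ via the inclusions of page~\pageref{dn-def}. The first task is to check that $\iota_n$ is an isometric isomorphism of $(\Z,\|\cdot\|_n)$ onto a dense subspace of $\K(\ell_2)$. There are three points: (a) $\iota_n(x)$ really is compact, because $x\in L(s',s)$ restricts to a bounded operator $H_n'\to H_{n+1}$ (the inclusions $H_n'\hk s'$ and $s\hk H_{n+1}$ being continuous), the inclusion $H_{n+1}\hk H_n$ is a compact diagonal operator, so that $x\c H_n'\to H_n$, and hence $\iota_n(x)=d_nxd_n$ (obtained from it by composition with two isometries), is compact; (b) $\iota_n$ is isometric, since $\|x\|_n=\|d_nxd_n\|_{\B(\ell_2)}$, which is exactly the identity used in the proof of Proposition~\ref{inequalities}; and (c) $\iota_n$ has dense range, since the linear span of the matrix units $e_{ij}$ lies in $\Z$, is dense in $\K(\ell_2)$, and is mapped onto itself by $\iota_n$ via $\iota_n(e_{ij})=(ij)^ne_{ij}$.

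The conclusion then follows by two transfers. First, any bijective linear isometry of normed spaces induces a bijective isometry of the algebraic tensor products for the projective tensor norm, straight from the definition of $\|\cdot\|_\pi$; applied to $\iota_n\c(\Z,\|\cdot\|_n)\to(\iota_n(\Z),\|\cdot\|_{\B(\ell_2)})$ this gives $\|z\|_{\pi,n}=\|(\iota_n\otimes\iota_n)(z)\|_\pi$, computed within $\iota_n(\Z)\otimes\iota_n(\Z)$. Secondly, passing from the dense subspace $\iota_n(\Z)$ to the Banach space $\K(\ell_2)$ does not change the projective tensor norm: a bounded bilinear form on $\iota_n(\Z)$ extends uniquely, with the same norm, to $\K(\ell_2)$, so the duals of $\iota_n(\Z)\,\widehat{\otimes}_\pi\,\iota_n(\Z)$ and $\K(\ell_2)\,\widehat{\otimes}_\pi\,\K(\ell_2)$ are canonically identified; the natural norm-decreasing map between these completed tensor products then has isometric adjoint and dense range, so a standard duality argument makes it an isometric isomorphism, in particular isometric on $\iota_n(\Z)\otimes\iota_n(\Z)$. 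Combining the two transfers yields $\|z\|_{\pi,n}=\big\|\sum_{j=1}^kd_nx_jd_n\otimes d_ny_jd_n\big\|_\pi$, the right-hand side being the projective tensor norm in $\K(\ell_2)\otimes\K(\ell_2)$.

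I expect the second transfer to be the only substantive point. Since the projective tensor product is not injective, an isometric embedding $\iota_n(\Z)\hk\K(\ell_2)$ by itself would not force an isometry on projective tensor products; what makes it work is precisely the \emph{density} of $\iota_n(\Z)$ in $\K(\ell_2)$, equivalently the coincidence of the duals of the two completed tensor products. Everything else --- the compactness in (a), the density of the span of matrix units, and the isometry (b), which the paper has already recorded --- is routine.
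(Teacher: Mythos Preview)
Your argument is correct. The underlying idea --- that $x\mapsto d_nxd_n$ is an isometry from $(\Z,\|\cdot\|_n)$ into operators on $\ell_2$ --- is exactly the paper's, but you package it more abstractly and, in one respect, more carefully than the paper does.

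The paper argues by transferring representations directly: given a near-optimal representation $\Delta_nz=\sum_l a_l\otimes b_l$ in $\B(\ell_2)\otimes\B(\ell_2)$, it writes $z=\sum_l d_n^{-1}a_ld_n^{-1}\otimes d_n^{-1}b_ld_n^{-1}$ and bounds $\|z\|_{\pi,n}$ accordingly, then says the reverse inequality is similar. This is quicker, but it tacitly assumes that $d_n^{-1}a_ld_n^{-1}\in\Z$ for arbitrary $a_l\in\B(\ell_2)$, which need not hold; equivalently, it treats the projective norm as though it were injective. Your route avoids this by (i) landing in $\K(\ell_2)$ rather than $\B(\ell_2)$, and (ii) invoking density so that the $\pi$-norm on $\iota_n(\Z)\otimes\iota_n(\Z)$ agrees with that on $\K(\ell_2)\otimes\K(\ell_2)$, via the identification of duals with bounded bilinear forms. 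That second transfer is exactly what patches the gap, and your remark that density (not mere isometric inclusion) is the operative hypothesis is the right diagnosis. For the paper's purposes only the inequality $\|z\|_{\pi,n}\gs\|\Delta_nz\|_h$ is actually used downstream, so nothing in the optimality argument is affected; but as a proof of the stated equality, yours is the tighter one.
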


\begin{proof}
Write \[\Delta_nz=\sum_{j=1}^kd_nx_jd_n\otimes d_ny_jd_n\in\B(\ell_2)\otimes\B(\ell_2).\] If $\Delta_nz=\sum_{l=1}^ma_l\otimes b_l\in\B(\ell_2)\otimes\B(\ell_2)$ and $\sum_{l=1}^m\|a_l\|\|b_l\|<\|\Delta_nz\|_{\pi}+\ve$ for some $\ve>0$, then $z=\sum_{l=1}^md_n^{-1}a_ld_n^{-1}\otimes d_n^{-1}b_ld_n^{-1}$ and
\[
\|z\|_{\pi,n} \ls\sum_{l=1}^m\|d_n^{-1}a_ld_n^{-1}\|_n\|d_n^{-1}b_ld_n^{-1}\|_n
 =\sum_{l=1}^m\|a_l\|\|b_l\|<\|\Delta_n z\|_{\pi}+\ve.
\]
This gives $\|z\|_{\pi,n}\ls\|\Delta_nz\|_{\pi}$. The reverse inequality is proved similarly.
\end{proof}

We also need the following well-known fact.

\begin{prop}
\label{self-adjoint-haagerup}
If $\H$ is a Hilbert space and $x_1,\ldots,x_m\in\B(\H)$, then
\[\Big\|\sum_{j=1}^mx_j\otimes x_j^*\Big\|_h=\Big\|\sum_{j=1}^mx_jx_j^*\Big\|.\]
\end{prop}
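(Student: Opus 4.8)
The plan is to prove the two inequalities separately, using the standard description of the Haagerup norm on an elementary tensor $w=\sum_{j=1}^m a_j\otimes b_j$ in $\B(\H)\otimes\B(\H)$, namely
\[
\|w\|_h=\inf\Big\{\Big\|\sum_{j=1}^m a_ja_j^*\Big\|^{\frac12}\Big\|\sum_{j=1}^m b_j^*b_j\Big\|^{\frac12}\Big\},
\]
the infimum being over all representations $w=\sum_j a_j\otimes b_j$. First I would apply this bound to the specific representation $\sum_j x_j\otimes x_j^*$ of the given element: here $a_j=x_j$ and $b_j=x_j^*$, so $\sum_j a_ja_j^*=\sum_j x_jx_j^*$ and $\sum_j b_j^*b_j=\sum_j x_jx_j^*$ as well. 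This immediately yields
\[
\Big\|\sum_{j=1}^m x_j\otimes x_j^*\Big\|_h\ls\Big\|\sum_{j=1}^m x_jx_j^*\Big\|.
\]

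For the reverse inequality I would pass to the linear map realisation of the Haagerup norm. Recall that for $w=\sum_j a_j\otimes b_j$ one has $\|w\|_h=\|\varphi\|_{cb}$, where $\varphi\colon M_m(\C)\to\B(\H)$ is the completely bounded map sending the matrix unit $e_{ij}$ to $a_ib_j$; equivalently, viewing $R=(a_1,\dots,a_m)$ as a row operator $\H^{(m)}\to\H$ and $C=(b_1,\dots,b_m)^t$ as a column operator $\H\to\H^{(m)}$, we have $\|w\|_h=\|R\|\,\|C\|$ for the \emph{optimal} factorisation, and in general $\|w\|_h=\inf\|R\|\,\|C\|$ over all factorisations $w\leftrightarrow RC$. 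The key point is then that the functional $T\otimes S\mapsto\operatorname{tr}$-type pairing, or more directly a suitable vector state, detects the full norm: for any unit vectors $\xi,\eta\in\H$ and any representation $\sum_j a_j\otimes b_j$ of our element, the Cauchy--Schwarz inequality gives
\[
\Big|\sum_{j=1}^m\la a_j\xi,\eta\ra\la b_j\eta,\xi\ra\Big|
\ls\Big(\sum_j|\la a_j\xi,\eta\ra|^2\Big)^{\frac12}\Big(\sum_j|\la b_j\eta,\xi\ra|^2\Big)^{\frac12}
\ls\Big\|\sum_j a_ja_j^*\Big\|^{\frac12}\Big\|\sum_j b_j^*b_j\Big\|^{\frac12},
\]
where the last step uses $\sum_j|\la a_j\xi,\eta\ra|^2=\la(\sum_j a_j^*(\eta\otimes\eta^*)a_j)\xi,\xi\ra\ls\|\sum_j a_ja_j^*\|$ and similarly for the $b_j$. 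Taking the infimum over representations, the left side is bounded by $\|w\|_h$ for \emph{every} choice of $\xi,\eta$. Applying this with our specific element, the left side becomes $\big|\sum_j\la x_j\xi,\eta\ra\overline{\la x_j\xi,\eta\ra}\big|=\sum_j|\la x_j\xi,\eta\ra|^2$, and taking the supremum over unit vectors $\xi,\eta$ recovers $\|\sum_j x_jx_j^*\|$ exactly (again by the vector-state computation $\sum_j|\la x_j\xi,\eta\ra|^2=\la(\sum_j x_j(\xi\otimes\xi^*)x_j^*)\eta,\eta\ra$ and optimising). Hence $\|\sum_j x_jx_j^*\|\ls\|\sum_j x_j\otimes x_j^*\|_h$, completing the argument.

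The main obstacle is getting the lower bound cleanly: one must be careful that the Cauchy--Schwarz estimate above is applied to a fixed but arbitrary representation of the element, and that the choice $b_j=$ (the adjoint structure) is what makes the two cross-terms combine into a single positive sum $\sum_j|\la x_j\xi,\eta\ra|^2$ rather than a product of two unrelated quantities. An alternative, and perhaps slicker, route to the lower bound is to compose with a single-state slice map: the functional $\omega_{\eta,\xi}\otimes\omega_{\xi,\eta}$ (where $\omega_{\zeta,\zeta'}(T)=\la T\zeta,\zeta'\ra$) applied to $\sum_j x_j\otimes x_j^*$ gives $\sum_j|\la x_j\xi,\eta\ra|^2$; since this functional has completely bounded norm at most $1$ on the Haagerup tensor product (a standard fact, as $\|\omega_{\eta,\xi}\|_{cb}=1$ and the Haagerup norm is a cross norm for completely bounded slicing), the inequality follows immediately. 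Either way, the computation is routine once the right pairing is chosen, and I would present whichever of the two is shortest given what the paper is willing to cite.
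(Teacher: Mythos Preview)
Your upper bound is correct: taking the representation $a_j=x_j$, $b_j=x_j^*$ in the infimum defining $\|\cdot\|_h$ immediately gives $\|\sum_j x_j\otimes x_j^*\|_h\ls\|\sum_j x_jx_j^*\|$.

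The lower bound, however, has a genuine gap. Your argument shows (correctly) that for all unit vectors $\xi,\eta\in\H$,
\[
\sum_{j=1}^m|\la x_j\xi,\eta\ra|^2\ls\Big\|\sum_{j=1}^m x_j\otimes x_j^*\Big\|_h,
\]
and you then assert that the supremum of the left side over unit $\xi,\eta$ equals $\|\sum_j x_jx_j^*\|$. This is false in general. Take $\H=\C^2$, $x_1=e_{11}$, $x_2=e_{12}$. Then $\sum_j x_jx_j^*=2e_{11}$ has norm~$2$, but for unit $\xi,\eta$ one computes
\[
\sum_j|\la x_j\xi,\eta\ra|^2=|\xi_1|^2|\eta_1|^2+|\xi_2|^2|\eta_1|^2=|\eta_1|^2\ls 1.
\]
So the rank-one slice functionals $\omega_{\eta,\xi}\otimes\omega_{\xi,\eta}$ only witness the value $1$, not $2$. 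The identity you invoke, $\sum_j|\la x_j\xi,\eta\ra|^2=\la(\sum_j x_j(\xi\otimes\xi^*)x_j^*)\eta,\eta\ra$, is fine, but optimising over $\eta$ gives $\|\sum_j x_j(\xi\otimes\xi^*)x_j^*\|$, and since $\xi\otimes\xi^*\ls I$ this is only an \emph{upper} bound for the target quantity; there is no reason the supremum over rank-one projections $\xi\otimes\xi^*$ should reach $\|\sum_j x_jx_j^*\|$.

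The paper avoids this by identifying $\|\sum_j x_j\otimes x_j^*\|_h$ with the completely bounded norm of the elementary map $a\mapsto\sum_j x_jax_j^*$ (citing~\cite[Theorem~4.3]{RRS}). This map is completely positive, so its cb norm is $\|\sum_j x_jIx_j^*\|=\|\sum_j x_jx_j^*\|$, which gives both inequalities at once. If you want to salvage a direct argument for the lower bound, you would need to test against $I$ (or an approximate identity) rather than a rank-one projection---which is precisely what the complete-positivity route does.
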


\begin{proof}
By~\cite[Theorem~4.3]{RRS}, the Haagerup norm on the left hand side is equal to the completely bounded norm of the map on~$\B(\H)$ given by~$a\mapsto \sum_{j=1}^m x_jax_j^*$, which is completely positive, so attains its completely bounded norm at the identity operator.
\end{proof}

\begin{theorem}%
For every $n\in\N$, we have $\kappa(n)=2n+1$.
\end{theorem}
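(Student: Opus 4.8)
We already know from the Remark that $\kappa(n)\in\{2n,2n+1\}$, so the entire task is to rule out $k=2n$; that is, to exhibit a sequence of tensors $z_N\in\Z\otimes\Z$ for which $\|z_N\|_{\pi,n}/\|z_N\|_{ah,2n}\to\infty$. The plan is to reduce everything to $\B(\ell_2)$ using Proposition~\ref{elemntary-equalities} and then to choose the $z_N$ so cleverly that both the projective norm and the absolute Haagerup norm can be computed (or estimated) exactly. The natural candidate is built from a single self-adjoint operator and its conjugates by $d_n$: concretely, take a finite-rank self-adjoint matrix $a_N\in\M_N\subseteq\B(\ell_2)$, set $x_j=y_j$ to be (scaled) matrix units or pieces of $a_N$, and arrange that $\Delta_n z_N=\sum_j d_nx_jd_n\otimes d_ny_jd_n$ has the form $\sum_j a\otimes a^*$ or $\sum_j a_j\otimes a_j^*$. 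Then Proposition~\ref{self-adjoint-haagerup} computes the Haagerup norm of $\Delta_n z_N$ as $\|\sum_j a_ja_j^*\|$, while the absolute Haagerup norm (a priori larger than the Haagerup norm, but comparable for positive-type sums) is controlled by the same quantity, and the projective norm of $\Delta_n z_N$ can be bounded below using a state or a well-chosen functional on $\B(\ell_2)\otimes\B(\ell_2)$.

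In more detail, here is the order in which I would carry out the steps. First, fix $N$ and work on $\C^N$. Choose $x_j=y_j=\lambda_j e_{jj}$ for suitable scalars $\lambda_j>0$, so that $d_nx_jd_n=\lambda_j j^{2n}e_{jj}$, and $\Delta_n z_N=\sum_{j=1}^N \lambda_j^2 j^{4n}\, e_{jj}\otimes e_{jj}$. Second, compute $\|\Delta_n z_N\|_\pi$: for a diagonal tensor $\sum_j c_j e_{jj}\otimes e_{jj}$ in $\B(\ell_2)\otimes\B(\ell_2)$, the projective norm equals $\sum_j |c_j|$ (testing against the functional $\sum_j e_{jj}^*\otimes e_{jj}^*$, extended by Hahn--Banach from the diagonal, gives the lower bound; the upper bound is the obvious decomposition), so $\|z_N\|_{\pi,n}=\sum_{j=1}^N \lambda_j^2 j^{4n}$. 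Third, compute the denominator $\|z_N\|_{ah,2n}$: using the representation $z_N=\sum_j x_j\otimes y_j$ with $|x_j|^2=|y_j|^2=\lambda_j^2 e_{jj}$, we get $\|z_N\|_{ah,2n}\le \|\sum_j \lambda_j^2 e_{jj}\|_{2n}=\sup_j \lambda_j^2 j^{4n}$; and one should check this is essentially optimal (any other representation is no better, by a standard argument passing to the compressions $p_m z_N p_m$ and using that the diagonal is "extremal" — or simply accept the upper bound, since we only need an upper bound on the denominator). Fourth, put it together: choosing $\lambda_j^2 j^{4n}=1/j$, i.e. $\lambda_j^2=j^{-4n-1}$, gives $\|z_N\|_{\pi,n}=\sum_{j=1}^N 1/j\sim\log N\to\infty$ while $\|z_N\|_{ah,2n}\le \sup_j 1/j=1$, so the ratio diverges and $\kappa(n)>2n$. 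Combined with $\kappa(n)\le 2n+1$ from Theorem~\ref{gt-s}, this forces $\kappa(n)=2n+1$.

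The step I expect to be the main obstacle is the third one: getting a clean, honest upper bound on the absolute Haagerup norm $\|z_N\|_{ah,2n}$, or, what amounts to the same thing, verifying that the diagonal decomposition really is (close to) optimal and that no cleverer decomposition $z_N=\sum_l a_l\otimes b_l$ with off-diagonal terms makes $\|\sum|a_l|^2\|_{2n}^{1/2}\|\sum|b_l|^2\|_{2n}^{1/2}$ dramatically smaller. For a diagonal tensor this should follow from a compression/averaging argument: conjugating a representation by the diagonal unitaries and averaging shows one may assume each $a_l,b_l$ is diagonal without increasing the relevant norms, after which the computation is transparent. Alternatively — and this is probably the slicker route that the paper will take — one avoids estimating $ah,2n$ directly by instead lower-bounding it against the ordinary Haagerup norm via Proposition~\ref{self-adjoint-haagerup}: arrange $\Delta_n z_N$ to be of the form $\sum_j a_j\otimes a_j^*$ (true here since $e_{jj}=e_{jj}^*$), so that $\|\Delta_n z_N\|_h=\|\sum_j a_ja_j^*\|$ is exactly computable, and then use the general inequalities between $h$, $ah$, and $\pi$ together with Proposition~\ref{elemntary-equalities} to pin down the growth rate. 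Once the norms of the explicit family $z_N$ are in hand, the conclusion is immediate.
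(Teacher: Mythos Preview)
Your argument has a genuine gap at the second step: the claim that the projective norm of a diagonal tensor $\sum_{j=1}^N c_j\,e_{jj}\otimes e_{jj}$ in $\B(\ell_2)\otimes_\pi\B(\ell_2)$ equals $\sum_j|c_j|$ is false. In fact it equals $\max_j|c_j|$. To see the upper bound, let $\omega=e^{2\pi i/N}$ and $u=\operatorname{diag}(1,\omega,\dots,\omega^{N-1})\in\M_N$; then for $D=\operatorname{diag}(\sqrt{c_1},\dots,\sqrt{c_N})$ one has
\[
\sum_{j=1}^N c_j\,e_{jj}\otimes e_{jj}=\frac1N\sum_{k=0}^{N-1}(Du^k)\otimes(D\bar u^{\,k}),
\]
and each factor has operator norm $\|D\|=\max_j\sqrt{c_j}$, so $\|\cdot\|_\pi\le\max_j c_j$. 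Equivalently, your proposed lower-bounding functional $\phi(a\otimes b)=\sum_{j=1}^N a_{jj}b_{jj}$ has norm $N$ as a bilinear form on $\B(\ell_2)\times\B(\ell_2)$ (take $a=b=I_N$), so it does not give the $\ell_1$ lower bound you want. The ``slicker route'' via Proposition~\ref{self-adjoint-haagerup} confirms this: with $a_j=\sqrt{c_j}\,e_{jj}$ one gets $\|\sum a_ja_j^*\|=\max_j c_j$, not a sum. Consequently, with your choice $c_j=\lambda_j^2 j^{4n}=1/j$ both $\|z_N\|_{\pi,n}$ and $\|z_N\|_{ah,2n}$ are bounded by $1$, and the ratio does not diverge.

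The underlying issue is that diagonal matrix units commute and have orthogonal ranges, so $\|\sum_j(d_nx_jd_n)^2\|$ collapses to a maximum. The paper's construction avoids this by using the \emph{off-diagonal} self-adjoint elements $x_j=e_{i_1,i_j}+e_{i_j,i_1}$: then every $(d_nx_jd_n)^2$ contains the common summand $i_1^{2n}i_j^{2n}\,e_{i_1,i_1}$, so $\bigl\|\sum_j(d_nx_jd_n)^2\bigr\|=i_1^{2n}\sum_j i_j^{2n}$ is a genuine sum, while a careful choice of the indices $i_1<\dots<i_{k+1}$ keeps $\|\sum_j|x_j|^2\|_{2n}=\max\{k\,i_1^{4n},\,i_{k+1}^{4n}\}$ under control. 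That overlap at the single index $i_1$ is exactly the missing idea in your proposal.
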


\begin{proof}
  By Theorem~\ref{gt-s}, it only remains to show that $\kappa(n)>2n$. %
  Choose $k_n\in\N$ sufficiently large that
  $k+1\ls2^k(k^{\frac{1}{4n}}-1)$ for all $k\gs k_n$.
  This inequality ensures that for every $k\gs k_n$, if we define
  \[i_1=2^k,\quad i_{k+1}=\lfloor k^{\frac{1}{4n}}2^k\rfloor,\quad
  i_j=i_{k+1}+j-(k+1),\ 2\ls j\ls k,\] then
  $i_1<i_2<\dots<i_{k+1}$. %
  Denote by $(e_{ij})_{i,j\in\N}$ the standard matrix units, and for
  $j=2,\ldots,k+1$, consider the self-adjoint operators
  \[x_j:=e_{i_1,i_j}+e_{i_j,i_1}\in\mathcal{M}_{i_{k+1}}\subset\Z.\]
  Let
  \[z_k:=\sum_{j=2}^{k+1}x_j\otimes x_j.\]  Since
  $d_nx_jd_n=i_1^ni_j^n(e_{i_1,i_j}+e_{i_j,i_1})$ and 
  $(d_nx_jd_n)^2=i_1^{2n}i_j^{2n}(e_{i_1,i_1}+e_{i_j,i_j})$,
  by Propositions~\ref{elemntary-equalities} and~\ref{self-adjoint-haagerup} we obtain
  \begin{align*}
    \|z_k\|_{\pi,n} & =\Big\|\sum_{j=2}^{k+1}d_nx_jd_n\otimes d_nx_jd_n\Big\|_{\pi}  \\
    & \gs\Big\|\sum_{j=2}^{k+1}d_nx_jd_n\otimes d_nx_jd_n\Big\|_h
    =\Big\|\sum_{j=2}^{k+1}(d_nx_jd_n)^2\Big\|
    =i_1^{2n}\sum_{j=2}^{k+1}i_j^{2n}.
  \end{align*}

  On the other
  hand, \[|x_j|^2=x_j^2=e_{i_1,i_1}+e_{i_j,i_j}\quad\text{and}\quad
  \sum_{j=2}^{k+1}d_{2n}x_j^2d_{2n}=i_1^{4n}k
  e_{i_1,i_1}+\sum_{j=2}^{k+1}i_j^{4n}e_{i_j,i_j}.\]
  Therefore %
  \begin{align*}
    \|z_k\|_{ah,2n} &\ls\Big\|\sum_{j=2}^{k+1}|x_j|^2\Big\|_{2n}
    =\Big\|\sum_{j=2}^{k+1}d_{2n}x_j^2d_{2n}\Big\|
    =\max\{i_1^{4n}k,i_{k+1}^{4n}\}  \\
    & \ls i_1^{4n}k+i_{k+1}^{4n}.
  \end{align*}
  Hence
  \begin{align*}
    \frac{\|z_k\|_{\pi,n}}{\|z_k\|_{ah,2n}}>\frac{i_1^{2n}\sum_{j=2}^{k+1}
      {i_j^{2n}}}{i_1^{4n}k+i_{k+1}^{4n}}
    >
    \frac{i_1^{-2n}i_2^{2n}}{1+k^{-1}i_1^{-4n}i_{k+1}^{4n}}\to \infty \text{ as $k\to \infty$},
  \end{align*}
  by our choice of $i_1,\dots,i_{k+1}$. So $\kappa(n)>2n$ as required.
\end{proof}

\section{Reformulations of the inequality}

Here we give several different ways of stating our inequality; in each case, an analogous result may be found in~\cite{GP}. The methods here are fairly standard, so full proofs are often omitted. Throughout, we write $K=\sup_{n\in\N}K_n\ls \pi^2/6$.

\subsection{Grothendieck's inequality with states}

Given $\xi\in U_n^{\circ}$, let
$\phi_{\xi}\in \Z'$ be given by $\phi_\xi(x)=\la
x\xi,\xi\ra$, $x\in \Z$. We call an element of the closed convex hull
of $\{ \phi_\xi\colon \xi\in U_n^\circ\}$ an %
\textit{$n$-state} on~$\Z$. %
Note that by Proposition~\ref{inequalities}(i), for any positive element $x\in \Z$ we have
$\|x\|_n=%
\sup\{\phi(x)\colon \phi\in V_n\}$,
where $V_n\subseteq \Z'$ is the set of all $n$-states on~$\Z$.
The next result may be deduced from Theorem~\ref{gt-s} by closely following the Hahn--Banach Separation argument of~\cite[\S23]{GP}.

\begin{theorem}
\label{gt-s-states}
For any continuous bilinear form $u\c\Z\times\Z\to\C$ and $n\in\N$, there are $(2n+1)$-states $\phi_1,\phi_2,\psi_1,\psi_2$ on $\Z$ with
\[|u(x,y)|\ls K\|u\|_n^*\bigl(\phi_1(x^*x)+\phi_2(xx^*)\bigr)^{\frac12}\bigl(\psi_1(y^*y)+\psi_2(yy^*)\bigr)^{\frac12}\] for all $x,y\in\Z$.
\end{theorem}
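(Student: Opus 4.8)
The plan is to deduce Theorem~\ref{gt-s-states} from the finite-sequence inequality of Theorem~\ref{gt-s} by a Hahn--Banach/minimax linearisation, in the spirit of~\cite[\S23]{GP}. First I would pass to a more symmetric form of Theorem~\ref{gt-s}: writing $P(x):=\|\sum_jx_j^*x_j\|_{2n+1}+\|\sum_jx_jx_j^*\|_{2n+1}$ for a finite system $(x_j)$ in $\Z$, and $Q(y)$ analogously, the bound $\|(x_j)\|^{\rc}_{2n+1}=\max\{\,\|\sum_jx_j^*x_j\|_{2n+1},\,\|\sum_jx_jx_j^*\|_{2n+1}\,\}^{1/2}\ls P(x)^{1/2}$ (since $\max\{a,b\}\ls a+b$) turns Theorem~\ref{gt-s} into $\bigl|\sum_ju(x_j,y_j)\bigr|\ls K\|u\|_n^*\,P(x)^{1/2}Q(y)^{1/2}$ for all finite systems. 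I also record two facts about the set $V:=V_{2n+1}$ of $(2n+1)$-states: by Proposition~\ref{inequalities}(i), $\|z\|_{2n+1}=\sup_{\phi\in V}\phi(z)$ for every positive $z\in\Z$, so that $P(x)=\sup\{\sum_j(\phi_1(x_j^*x_j)+\phi_2(x_jx_j^*))\colon\phi_1,\phi_2\in V\}$; and $V$ is weak-$*$ compact, being a weak-$*$ closed subset of the polar of the $0$-neighbourhood $\mathcal U_{2n+1}$ (Alaoglu--Bourbaki).

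Next I would reduce the theorem to an additive inequality. We may assume $\|u\|_n^*>0$ (else $u\equiv0$) and, rescaling $u$, that $K\|u\|_n^*=1$. It then suffices to find $\phi_1,\phi_2,\psi_1,\psi_2\in V$ with
\[
2\operatorname{Re}u(x,y)\ls\phi_1(x^*x)+\phi_2(xx^*)+\psi_1(y^*y)+\psi_2(yy^*)\qquad\text{for all }x,y\in\Z.
\]
Indeed, granted this, replacing $x$ by $e^{i\theta}x$ (which leaves $x^*x$ and $xx^*$ unchanged) and taking the supremum over $\theta$ gives the same bound with $2|u(x,y)|$ on the left; then replacing $(x,y)$ by $(\lambda^{1/2}x,\lambda^{-1/2}y)$ for $\lambda>0$ and using $2\sqrt{ab}=\inf_{\lambda>0}(\lambda a+\lambda^{-1}b)$ yields $|u(x,y)|\ls(\phi_1(x^*x)+\phi_2(xx^*))^{1/2}(\psi_1(y^*y)+\psi_2(yy^*))^{1/2}$, which is the assertion after undoing the normalisation. (The same computations run in reverse, so the additive form is in fact equivalent to the theorem.)

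To produce the states, set $s=(\phi_1,\phi_2)$, $t=(\psi_1,\psi_2)$, let $(s,t)$ range over the weak-$*$ compact convex set $V^4\subseteq(\Z')^4$, let $\mathcal P$ be the convex set of finitely supported probability measures $\mu=\sum_i\lambda_i\delta_{(x^{(i)},y^{(i)})}$ on $\Z\times\Z$, and put
\[
G(\mu,(s,t)):=\int\bigl(2\operatorname{Re}u(x,y)-\phi_1(x^*x)-\phi_2(xx^*)-\psi_1(y^*y)-\psi_2(yy^*)\bigr)\,d\mu(x,y).
\]
Then $G$ is affine in $\mu$ and affine and weak-$*$ continuous in $(s,t)$, so Ky Fan's minimax theorem gives $\min_{(s,t)\in V^4}\sup_{\mu\in\mathcal P}G=\sup_{\mu\in\mathcal P}\min_{(s,t)\in V^4}G$, with the outer minimum attained. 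For fixed $\mu$, writing $\tilde x_i:=\lambda_i^{1/2}x^{(i)}$ and $\tilde y_i:=\lambda_i^{1/2}y^{(i)}$ and using the two facts about $V$, one computes $\min_{(s,t)}G(\mu,(s,t))=2\operatorname{Re}\sum_iu(\tilde x_i,\tilde y_i)-P(\tilde x)-Q(\tilde y)$, which is $\ls0$ by the symmetric form of Theorem~\ref{gt-s} together with $2\sqrt{P(\tilde x)Q(\tilde y)}\ls P(\tilde x)+Q(\tilde y)$. Hence $\min_{(s,t)}\sup_\mu G\ls0$; a minimiser $(s_0,t_0)=(\phi_1,\phi_2,\psi_1,\psi_2)$ satisfies $\sup_{\mu\in\mathcal P}G(\mu,(s_0,t_0))\ls0$, and evaluating against point masses shows this is exactly the additive inequality above. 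Undoing the reduction of the second paragraph completes the argument.

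The main obstacle is precisely that $u$ is bilinear: $2\operatorname{Re}u(x,y)$ is a saddle, not a concave function of the pair $(x,y)$, so one cannot run a minimax argument with $(x,y)$ itself as the primal variable against the state variable. The device that repairs this --- and which is the real content of the argument --- is to take the primal variable to be a probability measure on $\Z\times\Z$, equivalently a finite family of \emph{weighted} pairs; this makes the pairing genuinely bilinear so Ky Fan's theorem applies, and the degree-$2$ homogeneity of $P$ and $Q$ lets the weights be reabsorbed, so that the inner minimisation reproduces exactly the finite-sequence bound of Theorem~\ref{gt-s}. The remaining ingredients --- the weak-$*$ compactness of $V_{2n+1}$, the lower semicontinuity needed to attain the minimum, and the phase and scaling reductions --- are routine; alternatively one can run the minimax over finite simplices and extract a common point by a finite-intersection argument on the compact set $V^4$.
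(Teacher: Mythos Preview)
Your argument is correct and is precisely the Hahn--Banach/minimax linearisation that the paper has in mind when it cites~\cite[\S23]{GP}; the passage from Theorem~\ref{gt-s} to the state formulation via Ky Fan's theorem, together with the phase--rescaling reduction and the absorption of the convex weights into the $x_j,y_j$, is exactly the standard mechanism. The only cosmetic difference is that Pisier phrases the endgame as a genuine separation argument rather than an explicit invocation of Ky Fan, but the two are formally equivalent here since $V_{2n+1}$ is weak-$*$ compact and convex and $G$ is affine in both variables.
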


\subsection{`Little' Grothendieck inequality}

As a consequence we obtain the following `little' Grothendieck inequality in~$\Z$. Recall that if $T\c X\to Y$ is a linear %
 map between Fr\'echet spaces, then $\|T\|_{n,k}:=\sup\{\|Tx\|_k\colon \|x\|_n\ls1\}$.

\begin{theorem}
For any Fr\'echet-Hilbert space~$H$, if $u_1,u_2\c\Z\to H$ are continuous linear maps, $k,m,n\in\N$ and $x_1,\ldots,x_m,y_1,\ldots,y_m\in\Z$, then
\begin{equation*}
\Big|\sum_{j=1}^m\la u_1(x_j),u_2(y_j)\ra_k\Big|\ls K\|u_1\|_{n,k}\,\|u_2\|_{n,k}\,\|(x_j)\|_{2n+1}^{\rc}\,\|(y_j)\|_{2n+1}^{\rc}.%
\end{equation*}
Equivalently, for any $k,n\in\N$ there are $(2n+1)$-states $\phi_1,\phi_2,\psi_1,\psi_2$ such that for all $x,y\in\Z$ we have
\begin{multline*}|\la u_1(x),u_2(y)\ra_k|\ls K\|u_1\|_{n,k}\|u_2\|_{n,k}\\
\times \bigl(\phi_1(x^*x)+\phi_2(xx^*)\bigr)^{\frac12}\bigl(\psi_1(y^*y)+\psi_2(yy^*)\bigr)^{\frac12}.
\end{multline*}
\end{theorem}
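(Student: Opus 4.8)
The plan is to derive the `little' Grothendieck inequality as a direct specialisation of Theorem~\ref{gt-s-states}, exploiting the inner product structure of the Fr\'echet--Hilbert target space. First I would recall that a Fr\'echet--Hilbert space~$H$ carries an increasing sequence of Hilbertian seminorms $(\|\cdot\|_k)_{k\in\N}$ inducing its topology, each arising from a (semi-)inner product $\la\cdot,\cdot\ra_k$, so that for continuous linear $u_1,u_2\c\Z\to H$ the quantities $\|u_i\|_{n,k}$ are finite. Fixing $k,n\in\N$, the natural move is to define the continuous bilinear form $u\c\Z\times\Z\to\C$ by $u(x,y):=\la u_1(x),u_2(y)\ra_k$. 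By Cauchy--Schwarz in the $k$-th inner product, $|u(x,y)|\ls\|u_1(x)\|_k\|u_2(y)\|_k\ls\|u_1\|_{n,k}\|u_2\|_{n,k}$ whenever $\|x\|_n,\|y\|_n\ls1$, so $\|u\|_n^*\ls\|u_1\|_{n,k}\|u_2\|_{n,k}$.

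With this in hand, the first displayed inequality of the theorem is immediate: apply inequality~\eqref{target-inequality} (valid with $k=2n+1$ by Theorem~\ref{gt-s}, and with constant $K=\sup_n K_n\ls\pi^2/6$) to this particular~$u$, and substitute the bound $\|u\|_n^*\ls\|u_1\|_{n,k}\|u_2\|_{n,k}$ on the right-hand side. For the ``Equivalently'' clause, I would instead feed the same~$u$ into Theorem~\ref{gt-s-states}: that theorem supplies $(2n+1)$-states $\phi_1,\phi_2,\psi_1,\psi_2$ on~$\Z$ with
\[|u(x,y)|\ls K\|u\|_n^*\bigl(\phi_1(x^*x)+\phi_2(xx^*)\bigr)^{\frac12}\bigl(\psi_1(y^*y)+\psi_2(yy^*)\bigr)^{\frac12},\]
and again replacing $\|u\|_n^*$ by $\|u_1\|_{n,k}\|u_2\|_{n,k}$ yields exactly the claimed multi-line estimate. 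The equivalence of the two formulations within the theorem then runs in the two familiar directions: the state formulation implies the finite-sequence one by summing over $j$, applying Cauchy--Schwarz to the two factors $(\phi_1(x_j^*x_j)+\phi_2(x_jx_j^*))^{1/2}$ and $(\psi_1(y_j^*y_j)+\psi_2(y_jy_j^*))^{1/2}$, using positivity and linearity of the $n$-states to pull the sums inside as $\phi_1(\sum x_j^*x_j)+\phi_2(\sum x_jx_j^*)$, and bounding each such expression by $2\max\{\|\sum x_j^*x_j\|_{2n+1},\|\sum x_jx_j^*\|_{2n+1}\}=2(\|(x_j)\|^{\rc}_{2n+1})^2$ since $\phi_i$ is an $n$-state, hence a $(2n+1)$-state, evaluated on a positive element; the reverse implication is the Hahn--Banach separation argument already invoked for Theorem~\ref{gt-s-states}.

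Since the paper explicitly says full proofs in this section are often omitted and the methods are standard, I would present this compactly, essentially as the two-line reduction above plus a sentence on the equivalence. The only point requiring a little care -- and the mild obstacle -- is making sure a Fr\'echet--Hilbert target is handled correctly: one must check that $\la\cdot,\cdot\ra_k$ really is a continuous sesquilinear form on~$H$ for which Cauchy--Schwarz holds (it does, as $\|\cdot\|_k$ is a Hilbertian seminorm), and that $u$ defined via it is jointly continuous on $\Z\times\Z$ (it is, by continuity of $u_1,u_2$ and of $\la\cdot,\cdot\ra_k$), so that Theorems~\ref{gt-s} and~\ref{gt-s-states} genuinely apply. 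Beyond that, everything is bookkeeping: no new analytic input is needed, only the observation that a Hilbertian bilinear form factors through its associated maps, and the standard passage between the ``for all $x,y$'' state form and the ``finite sequences'' form via Cauchy--Schwarz and positivity of states.
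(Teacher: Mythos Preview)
Your proposal is correct and follows the same approach as the paper: define the bilinear form $u(x,y)=\la u_1(x),u_2(y)\ra_k$, bound $\|u\|_n^*\ls\|u_1\|_{n,k}\|u_2\|_{n,k}$ via Cauchy--Schwarz, and then invoke Theorems~\ref{gt-s} and~\ref{gt-s-states}. The paper's proof is literally this one-line reduction; your additional remarks on the equivalence of the two formulations (and the care with Fr\'echet--Hilbert seminorms) are fine elaborations, though note your slip ``$\phi_i$ is an $n$-state, hence a $(2n+1)$-state'' should simply read ``$\phi_i$ is a $(2n+1)$-state'', and the passage from the state form back to the finite-sequence form via Cauchy--Schwarz costs a factor of~$2$ in the constant.
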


\begin{proof}
Apply Theorems~\ref{gt-s} and \ref{gt-s-states} to $u_k(x,y):=\la u_1(x),u_2(y)\ra_k$ for $k\in\N$.
\end{proof}

Using the same argument as in the proof of Theorem~\ref{gt-s-states} we can obtain an equivalent version of the `little' Grothendieck inequality.

\begin{theorem}
For any Fr\'echet-Hilbert space $H$, if $u\c\Z\to H$ is a continuous linear map and $k,n\in\N$, then there exist $(2n+1)$-states $\phi_1,\phi_2$ on $\Z$ such that for all $x\in\Z$ we have
\[\|ux\|_k\ls\sqrt{K}\|u\|_{n,k}\big(\phi_1(x^*x)+\phi_2(xx^*)\big)^{\frac12}.\]
\end{theorem}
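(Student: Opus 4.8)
The plan is to deduce this `little' Grothendieck inequality for a single map $u\c\Z\to H$ from the bilinear version just proved, by the standard trick of taking $u_1=u_2=u$ and polarising. First I would apply the previous theorem (the bilinear `little' Grothendieck inequality) with $u_1=u_2=u$: for any $k,n\in\N$ this yields $(2n+1)$-states $\phi_1,\phi_2,\psi_1,\psi_2$ with
\[
|\la ux,uy\ra_k|\ls K\|u\|_{n,k}^2\bigl(\phi_1(x^*x)+\phi_2(xx^*)\bigr)^{\frac12}\bigl(\psi_1(y^*y)+\psi_2(yy^*)\bigr)^{\frac12}
\]
for all $x,y\in\Z$. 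To recover a one-sided estimate on $\|ux\|_k^2=\la ux,ux\ra_k$, the naive substitution $y=x$ gives $\|ux\|_k^2\ls K\|u\|_{n,k}^2(\phi_1(x^*x)+\phi_2(xx^*))^{1/2}(\psi_1(x^*x)+\psi_2(x^*x))^{1/2}$, and then one bounds the geometric mean by the arithmetic mean of the two bracketed quantities. Averaging the two sets of $(2n+1)$-states produces a single pair $\tilde\phi_1:=\frac12(\phi_1+\psi_1)$, $\tilde\phi_2:=\frac12(\phi_2+\psi_2)$, which are again $(2n+1)$-states since $V_{2n+1}$ is convex; this gives $\|ux\|_k^2\ls K\|u\|_{n,k}^2\bigl(\tilde\phi_1(x^*x)+\tilde\phi_2(xx^*)\bigr)$, and taking square roots yields the claimed inequality with constant $\sqrt K$.

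Alternatively—and this is presumably what the sentence ``the same argument as in the proof of Theorem~\ref{gt-s-states}'' is pointing at—one can run the Hahn--Banach separation argument of~\cite[\S23]{GP} directly. Here one would consider, for fixed $k,n$, the function $x\mapsto \|ux\|_k^2$ on $\Z$ together with the upper bound coming from the bilinear inequality, and separate by a linear functional on the relevant space of self-adjoint elements to extract the two $(2n+1)$-states $\phi_1,\phi_2$ in one stroke, avoiding the explicit averaging. Either route is routine once the bilinear `little' inequality and the convexity and positivity properties of $n$-states (from the remarks following Theorem~\ref{gt-s}, via Proposition~\ref{inequalities}(i)) are in hand.

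I would present the short first route. The main point to be careful about is the passage from the symmetric bilinear bound to a one-sided quadratic bound: one must check that setting $y=x$ is legitimate (it is—the inequality holds for all $x,y\in\Z$ simultaneously, with the \emph{same} states) and that the arithmetic–geometric mean step $\sqrt{ab}\ls\frac12(a+b)$ is applied to the correct nonnegative quantities, namely $a=\phi_1(x^*x)+\phi_2(xx^*)$ and $b=\psi_1(x^*x)+\psi_2(x^*x)$. The only genuinely structural fact needed is convexity of the set $V_{2n+1}$ of $(2n+1)$-states, so that the averaged functionals $\tilde\phi_i$ remain $(2n+1)$-states; this is immediate from the definition of $n$-state as an element of a closed convex hull. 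Nothing here is deep, so the ``proof'' can be stated in a few lines.

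\begin{proof}
Fix $k,n\in\N$ and apply the previous theorem with $u_1=u_2=u$: there are $(2n+1)$-states $\phi_1,\phi_2,\psi_1,\psi_2$ such that
\[
|\la ux,uy\ra_k|\ls K\|u\|_{n,k}^2\bigl(\phi_1(x^*x)+\phi_2(xx^*)\bigr)^{\frac12}\bigl(\psi_1(y^*y)+\psi_2(yy^*)\bigr)^{\frac12}
\]
for all $x,y\in\Z$. Putting $y=x$ and using $\sqrt{ab}\ls\tfrac12(a+b)$ for $a,b\gs0$ gives
\[
\|ux\|_k^2\ls K\|u\|_{n,k}^2\cdot\tfrac12\bigl(\phi_1(x^*x)+\phi_2(xx^*)+\psi_1(x^*x)+\psi_2(x^*x)\bigr).
\]
Since $V_{2n+1}$ is convex, $\widetilde\phi_1:=\tfrac12(\phi_1+\psi_2)$ and $\widetilde\phi_2:=\tfrac12(\phi_2+\psi_1)$ are $(2n+1)$-states, and the right-hand side equals $K\|u\|_{n,k}^2\bigl(\widetilde\phi_1(x^*x)+\widetilde\phi_2(xx^*)\bigr)$ after relabelling. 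Taking square roots yields the claim.
\end{proof}
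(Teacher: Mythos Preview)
Your approach is sound, but it differs from the paper's: the paper directs the reader to rerun the Hahn--Banach separation argument of~\cite[\S23]{GP} (i.e.\ the same argument as for Theorem~\ref{gt-s-states}) directly for the quadratic form $x\mapsto\|ux\|_k^2$, whereas you deduce the result from the already-established bilinear `little' inequality by setting $y=x$, applying AM--GM, and averaging the four states down to two. Your route is the more economical one once the bilinear version is in hand; the paper's route is more self-contained and produces the two states in one stroke without the averaging step.

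That said, your written proof contains clerical slips you should repair. When you substitute $y=x$, the second bracket is $\psi_1(x^*x)+\psi_2(xx^*)$, not $\psi_1(x^*x)+\psi_2(x^*x)$; this typo appears both in your plan and in the displayed proof. Consequently, the correct averaged states are $\widetilde\phi_1=\tfrac12(\phi_1+\psi_1)$ (collecting the $x^*x$ contributions) and $\widetilde\phi_2=\tfrac12(\phi_2+\psi_2)$ (collecting the $xx^*$ contributions), exactly as you wrote in your preliminary plan, not $\tfrac12(\phi_1+\psi_2)$ and $\tfrac12(\phi_2+\psi_1)$ as in your formal proof. With the indices corrected, the argument goes through as intended.
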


\renewcommand{\bibliofont}{\normalsize\baselineskip=17pt}

\bibliographystyle{plain}

\bibliography{bibliography}

\end{document}